\documentclass[10pt,psamsfonts]{amsart}
\usepackage[utf8]{inputenc}
\usepackage[dvips]{psfrag}
\usepackage{array}
\usepackage{color}
\usepackage{epsfig}
\usepackage{url}
\usepackage[hidelinks]{hyperref}
\usepackage{overpic}
\usepackage{epstopdf}
\usepackage{graphics}
\usepackage[all]{xy}
\usepackage{tikz, pgfplots}
\usepackage{subcaption}
\pgfplotsset{compat=1.12}
\usepgfplotslibrary{fillbetween}
\usepackage[shortlabels]{enumitem}
\usepackage{pythonhighlight}
\usepackage{comment}
\usepackage{amsmath}
\usepackage{amsthm}
\usepackage{amssymb}
\usepackage{amscd}
\usepackage{amsfonts}
\usepackage{amsbsy}
\usepackage{graphicx}
\usepackage[dvips]{psfrag}
\usepackage{array}
\usepackage{color}
\usepackage{tikz}
\usepackage{xcolor}
\usepackage{epsfig}
\usepackage{url}
\usepackage{hyperref}
\usetikzlibrary{arrows,intersections,calc}
\usepackage{float}
\usepackage{stmaryrd}	
\usepackage{overpic}
\usepackage{epstopdf}
\usepackage{epsfig,afterpage}
\usepackage[all]{xy}
\usepackage[makeroom]{cancel}
\usepackage{soul}
\usepackage{pgf,tikz}
\usepackage{mathrsfs}
\usetikzlibrary{arrows}
\usepackage{amsmath}
\usepackage{amsthm}
\usepackage{amssymb}
\usepackage{amscd}
\usepackage{amssymb}
\usepackage{amsfonts}
\usepackage{amsbsy}
\usepackage{amsaddr}
\usepackage{graphicx}
\usepackage{array}
\usepackage{color}
\usepackage{url}
\usepackage{overpic}
\makeatletter
\DeclareFontFamily{U}{tipa}{}
\DeclareFontShape{U}{tipa}{m}{n}{<->tipa10}{}
\newcommand{\ark@char}{{\usefont{U}{tipa}{m}{n}\symbol{62}}}%

\newcommand{\ark}[1]{\mathpalette\ark@arc{$#1$}}

\newcommand{\ark@arc}[2]{%
	\sbox0{$\m@th#1#2$}%
	\vbox{
		\hbox{\resizebox{\wd0}{\height}{\ark@char}}
		\nointerlineskip
		\box0
	}%
}
\makeatother
\newcommand{\R}{\ensuremath{\mathbb{R}}}

\newtheorem {theorem} {Theorem} %[section]

\newtheorem {definition} {Definition}
\newtheorem {remark} {Remark}
\newtheorem {example} {Example}

\begin{document}
	\renewcommand{\arraystretch}{1.5}
	
	\title[Shadowing Theorem for piecewise smooth vector fields]
	{About the Shadowing Theorem for piecewise smooth vector fields with sliding motion} 
	\author[T. Carvalho]
	{Tiago Carvalho$^1$}

	\address{$^1$ Departamento de  Matem\'{a}tica,  Instituto de Bioci\^{e}ncias, Letras e Ci\^{e}ncias Exatas,
		Universidade Estadual Paulista UNESP, Rua Crist\'{o}v\~{a}o Colombo, 2265, Zip Code 15054-000, S\~{a}o Jos\'{e} do Rio Preto, SP, Brazil.}\email{tiago.carvalho1@unesp.br}

	\subjclass[2020]{Primary: 34A36. Secondary: 37B65, 37C50.}
	
	%34A36 Discontinuous ordinary differential equations
	
	%34A45 Theoretical approximation of solutions to ordinary differential equations
	
	%34A60 Ordinary differential inclusions
	
	%37C05 Dynamical systems involving smooth mappings and diffeomorphisms
	
	%37C15 Topological and differentiable equivalence, conjugacy, moduli, classification of dynamical systems
	
	%37C40 Smooth ergodic theory, invariant measures for smooth dynamical systems
	
	%34F05 Ordinary differential equations and systems with randomness
	
	%34C28 Complex behavior and chaotic systems of ordinary differential equations
	
	%37B65 Approximate trajectories, pseudotrajectories,shadowing and related notions for topological dynamical systems
	
	%37C50 Approximate trajectories (pseudotrajectories,shadowing, etc.) in smooth dynamics
	
	%37D45 Strange attractors, chaotic dynamics of systems with hyperbolic behavior

	\keywords{Shadowing; Filippov systems; Piecewise Smooth Vector Fields}
	
	\maketitle
	
	\begin{abstract}

%texto do abstract revisado IA

Since every modeling process in real-world situations is subject to errors, the study of the so-called shadowing property becomes highly relevant. This property allows for the identification of true orbits that closely follow a chain of trajectories in which some degree of approximation is introduced at each step. The objective of this paper is to establish a version of the Shadowing Theorem for vector fields governed by ordinary differential equations in the piecewise-smooth setting, where the dynamics alternate between distinct regimes, or “on–off” stages. First, we present an example showing that such a result cannot be obtained under the same hypotheses commonly assumed in the C$^2$ scenario. Consequently, under appropriate assumptions, we prove a Shadowing-like Theorem specifically tailored to this framework. Furthermore, we propose several extensions of the main result aimed at capturing characteristic phenomena intrinsic to piecewise-smooth vector fields - features that do not arise in the  C$^2$ setting.

	\end{abstract}

	%\linenumbers
	
	\section{Introduction}\label{intro}

Since mathematicians and scientists are continuously seeking mathematical models to describe real-world phenomena, it is crucial to understand under which conditions the dynamics generated by such models $-$ typically only approximations of the underlying processes $-$ are robust under small perturbations of initial conditions and reproducible across different computational implementations.

In numerical investigations, the evolution of an initial condition is represented by a sequence of iterates produced by successive evaluations of the model. Each iterate is inherently affected by rounding and truncation errors, and subsequent evaluations may be carried out from points that differ slightly from those previously computed, for instance when irrational values must be approximated by rational numbers within finite-precision arithmetic.

In the practice, what happens is the following (see Figure \ref{Fig cadeia de trajetorias}):  given a point $y_0$ as the initial condition, let us consider the trajectory  of a vector field $X$ passing through $y_0$ at time $h_{-1}=0$. In this case,  $y_0 = \varphi(0,y_0)$ where $\varphi(.,.)$ denotes the flow of $X$. 
The trajectory is calculated using a numerical method and, after a time  $h_0$, it is   stated that $y_1 = \varphi(h_0,y_0)$. However, errors were made during the process, and point $y_1$ is in fact an approximation of the exact value of $\varphi(h_0,y_0)$. However, the numerical method takes point $y_1$ as a new initial condition and, after a time  $h_1$, (inaccurately) states that $y_2 = \varphi(h_1,y_1)=\varphi(h_0 + h_1,y_0)$. The procedure is iterated until the point $y_N$ is reached. % (in a finite procedure $-$ see Remark \ref{remark tempo infinito}). 

	\begin{figure}[h]
	\begin{center}
		\begin{overpic}[width=4in]{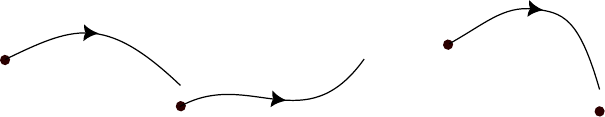}
		%	\begin{overpic}[grid,tics=10,width=4in]{cadeiaTraj.pdf}
			\put(-1,6){$y_0$}
				\put(14,16){$\Gamma_1$}
			\put(28,-2){$y_1$}
				\put(45,6){$\Gamma_2$}
			\put(63,10){$\hdots$}
			\put(72,8){$y_{N-1}$}
				\put(85,12){$\Gamma_N$}
			\put(98,-3){$y_N$}
		\end{overpic}
	\end{center}
	\caption{A (finite) chain of trajectories.}\label{Fig cadeia de trajetorias}
\end{figure}

Given the notation presented above, let $\Gamma_i = \{ \varphi(h,y_{i-1}) \, | \, h \in [h_{i-2},h_{i-1}]\}$, with $i = 1,2, \hdots, N$, be an arc of trajectory of $X$ $-$ an exact one, not a numerically obtained. 
	A \textbf{chain of trajectories}  determined by the non-equilibria points $\{ y_k \}_{k=0}^{N}$ of a vector field $X$ is the set $ \bigcup_{i=1}^N \Gamma_i$. The numbers $h_i$, with $i=0,1, \hdots, N-1$ are the \textbf{timesteps}.

%
%\begin{remark}\label{remark tempo infinito}
%Previously we obtained a finite sequence $\{ y_k \}_{k=0}^{N}$. Using an analogous approach, a bi-infinite sequence $\{ y_k \}_{- \infty}^{+ \infty}$ is also obtained. As we will show below, the previous literature presents  slightly distinct results for finite and infinite sequences of points $y_k$. 
%\end{remark}
%
%
%In the sequel we will denote $\Z$ the set of integer numbers.

\begin{definition}
A chain of trajectories  determined by the non-equilibria points $\{ y_k \}_{k=0}^{N}$  %(respectively, $\{ y_k \}_{- \infty}^{+ \infty}$) 
of a vector field $X$ is a \textbf{$\delta$-pseudo orbit} when for a  real number $\delta>0$, it holds that 
\[ \| y_{k+1} - \varphi(h_k,y_k) \| \leq \delta, \mbox{ for all } k \in \{ 0 , 1 , \hdots, N-1 \} %(\mbox{respectively, } k \in \Z),
 \]where $\{ h_k \}_{k=0}^{N-1}$ %(respectively, $\{ h_k \}_{ - \infty}^{+ \infty}$)
  is a sequence of positive times and $\varphi(.,y_k)$ is a trajectory of $X$ that passes through $y_k \in \R^n$ at time $h_{k-1}$, with $h_{-1}=0$. 
\end{definition}

In this paper, we investigate sufficient conditions under which a PSVF satisfies the shadowing property. As stated in the abstract of \cite{SurveyShadowing}: \textit{a shadow is an exact solution to a set of equations that remains close to a numerical solution for a long time. Shadowing can thus be used as a form of backward error analysis for numerical solutions to ODEs.} Formally, we consider the following definition:

\begin{definition}	A $\delta$-pseudo orbit determined by $\{ y_i \}_{i=0}^{N}$ %(respectively, $\{ y_k \}_{- \infty}^{+ \infty}$) 
	 with timesteps  $\{ h_i \}_{i=0}^{N-1}$ %(respectively, $\{ h_k \}_{- \infty}^{+ \infty}$)
	  is \textbf{$\varepsilon$-shadowed} by a real trajectory  passing through $\{ x_i \}_{i=0}^{N}$ %(respectively, $\{ x_i \}_{- \infty}^{+ \infty}$)
	   with timesteps  $\{ \tau_i \}_{i=0}^{N-1}$ %(respectively, $\{ \tau_i \}_{- \infty}^{+ \infty}$)
	    when $x_{i+1} = \varphi(\tau_i,x_i)$ for some trajectory of $X$, where $\| y_i - x_i \| \leq \varepsilon$ and $|h_i - \tau_i| \leq \varepsilon$. 
\end{definition}

Several results concerning the Shadowing Theorem can be obtained in the context of discrete dynamical systems (see \cite{SurveyShadowing} and references therein). However, recently, some versions of the Shadowing Theorem were obtained in distinct contexts. For example, \cite{Backes-2022} obtained a version for nonautonomous and nonlinear models with impulses, \cite{Zhan-2020} obtained a version for stochastic differential equations, \cite{Gao-2021}
obtained a version for random dynamical systems. In the present paper we search for a version of the previous Shadowing Theorem for the context of \textbf{piecewise smooth vector fields} (PSVFs for short) which, for the best of our knowledge, is a missing gap in the literature. The development of a general  theory concerning PSVFs has been established in recent years not only because of the beauty of the theoretical results but also due to the value of it in applied sciences. Nowadays it is well known that a big variety of real phenomena are modeled using ODEs  that are subjected to a sudden and intermittent change in their description. It occurs in models describing the cancer dynamics where the model has a description for the period when the treatment is implemented (chemotherapy, for example) and a distinct description for the period when the treatment is interrupted for the body to recover from side effects (see \cite{Carvalho-leukemia,Carvalho-typicalSingCancer}). %The same abrupt and intermittent change on the model is observed in recent treatment models of HIV (see \cite{Ananworanich:2006,CarCrisGonTon-HIV-NoDy-2020,Tang:2015,Tang:2012,Carvalho-HiV-Daniel}), in power electronic (see \cite{CarCrisPagTon-PhysicaD-2017, Rony-IJBC-TS,  Kousaka}), predator-prey models (see \cite{CarNovGonShilnikov, Krivan1, Piltz}), COVID-19 (see \cite{Covid-Science,Covid-NoDy}) among other where a kind of \textit{"on-off"} state is considered on the model (see also  \cite{EGA2013, Hek2010, KruSzm2001, LlibrePonce2012, Rossa, IEEE-market}). 

In general, for the piecewise smooth scenario,  there exists a \textbf{control function} $f$ determining when the  \textit{"on-off"} state is considered. This means that for $f(z) \geq 0$ the model is governed by a vector field $X^{+}(z)$ and for $f(z) \leq 0$ the model is governed by another vector field $X^{-}(z)$. Then, the \textbf{switching manifold} $\Sigma = f^{-1}(0)$ plays a very important role in the analysis.

Now, the same behavior observed for the smooth scenario can occur here, i.e., there exists a piecewise smooth model that, due to approximations, presents a $\delta$-pseudo orbit. The question is:

\vspace{.3cm}
\begin{center}
	\textit{Under what hypotheses can a $\delta$-pseudo trajectory of a PSVF be $\varepsilon$-shadowed by a true orbit of the system ? }
\end{center}
\vspace{.3cm}

In Appendix we present a classical version of the Shadowing Theorem (Theorem \ref{teo shadowing para suaves}), stated in \cite{coomes-1994-FiniteChain}.  What we will require first is that, both vector fields $X^{\pm}$,  satisfy the hypotheses of Theorem \ref{teo shadowing para suaves}, because we hope to obtain the shadowing for each sub-chain of trajectories of the PSVFs. But, is it enough ? As we shall see in Example \ref{exemplo nao da certo} the answer is NO and some extra hypotheses must to be imposed for the PSVF. This is done in our main result,  Theorem \ref{teoShadowing}. 

After proving the main result in Theorem \ref{teoShadowing}, that extend to PSVFs the classical Shadowing Theorem, the dynamics around a PSVF is explored in-depth in order to reveal  some typical situations and we prove Theorems \ref{teoShadowingAnexo1}, \ref{teoShadowingAnexo2} and \ref{teoShadowingAnexo3} that will break, little by little, the similarities with the classic case.

The paper is organized as follows: In Section \ref{secao teoria basica CVSPs} we present the basic theory about PSVFs. We stress that here we define for the first time in the literature the original notion of \textbf{saturation of a recursive point} for PSVFs. Moreover, we give examples of PSVFs satisfying such a definition.  In Section \ref{secao main result} we announce and prove the main result of the paper, which is a \textbf{Shadowing-Like Theorem  for PSVFs defined in $\R^n$}. Section \ref{secao estendidos} is devoted to prove extensions  of the main result. In Section \ref{secao conclusao} we provide  some conclusions about the results proved in the paper are exposed. In the Appendix we announce the Shadowing Theorem given in \cite{coomes-1994-FiniteChain}.

	\section{Preliminaries}\label{secao teoria basica CVSPs}
	
Here we will state the basic definitions, conventions and notations concerning the theory about PSVFs. The first important task is the domain of the vector field. Let $V$ be an open set of $\mathbb{R}^n$ and consider  $\Sigma \subset V$ a  codimension-$1$ manifold in $\mathbb{R}^n$ given by $\Sigma = f^{-1} (0)= \left\lbrace q \in V : f(q) = 0 \right\rbrace $, where $f : V \rightarrow \mathbb{R} $ is a smooth function having $0 \in \mathbb{R}$ as a regular value (that is, $\nabla f(p) \neq 0 $, for $p \in f^{-1} (0)$). We call $\Sigma$ the \textbf{switching manifold} whose boundary separates the regions $\Sigma^{+} = \left\lbrace q \in V : f(q) \geq 0 \right\rbrace $ and $\Sigma^{-} = \left\lbrace q \in V : f(q) \leq 0 \right\rbrace $. 
	Let $\mathfrak{X}^r$ be the space of  $C^r$-vector fields in $V \subset \mathbb{R}^n$ endowed with the
	$C^r$-topology, with $r \geq 1$ large enough for our purposes. Call $\mathcal{Z}^r$ the space of PSVFs $Z : V \rightarrow \mathbb{R}^n $ such that
		\begin{equation}
		\label{sis}
		Z (q) = \left\{ \begin{array}{c}
			X^{+} (q) \, \, if \,\, q \, \in \Sigma^{+} \\
			X^{-}(q) \, \, if \,\, q \, \in \Sigma^{-}
		\end{array} \right. ,
	\end{equation}
	where $X^{+} = \left( X_1^{+}, X_2^{+}, \cdots ,  X_n^{+}\right) $, $X^{-} = \left(  X_1^{-}, X_2^{-}, \cdots ,  X_n^{-} \right) \in \mathfrak{X}^ r .$ We denote \eqref{sis} simply by $Z = (X^{+}, X^{-})$ when there is no confusion about the switching manifold and the function $f$ defining it.

Since we are considering the $C^r$-topology in  $\mathfrak{X}^r$, we get  the classical $C^r$-norm $\mid \cdot \mid_{C^r}$ of the  vector fields  $X^{+}$ and $X^{-}$ restricted to $\Sigma^{+}$ and $\Sigma^{-}$, respectively. In fact,  
	\[  \mid X^{\pm} \mid_{C^r}  = max_{|j| \leq r} sup_{x \in \Sigma^{\pm}} | D^j X^{\pm}(x) |, \]where $j=(j_1, \hdots,j_n)$ is a vector representing the order in the  mixed partial derivative $D^j X^{\pm}$  of $X^{\pm}$; moreover, $|j| = j_1 + \hdots + j_n$.  Note that $Z$ is multi-valued in $\Sigma$ (see the seminal book \cite{Fi}). Then, for points in $\Sigma$,
	\[ \parallel Z \parallel_{C^r} = max \left\lbrace \mid X^{+} \mid_{C^r}, \mid X^{-} \mid_{C^r} \right\rbrace.\] % where $\mid \cdot \mid_{C^r}$ denotes the classical $C^r$-norm of the  vector fields $X^{+}$ and $X^{-}$ restricted to $\Sigma^{+}$ and $\Sigma^{-}$, respectively, \textcolor{blue}{given by}
	%\[ \mid ...  \mid_{C^r}   \]

	In order to define rigorously the flow of  $Z$ passing through a point $p \in V$, we 
	must understand  the contact between the vector fields $X^{+},$ $X^{-}$ and $\Sigma$. This contact is characterized using the Lie derivative $X^{+} f(q) = \left\langle \nabla f(q), X^{+}(q) \right\rangle $, where $\left\langle \cdot, \cdot \right\rangle $ is the usual inner product. %We also use higher order derivatives given by $(X^{+})^k f = {X^{+}} ((X^{+})^{k-1} f) = \left\langle \nabla (X^{+})^{k-1} f, X^+ \right\rangle$ with $k$ being a positive integer.  
	The following generic regions appear on $\Sigma$:
	\begin{enumerate}
		\item[$\bullet$] \textbf{Crossing Region:}  $\Sigma^{c} = \left\lbrace p \in \Sigma \mid X^{+}f(q)  X^{-}f(q) > 0 \right\rbrace $. In addition we denote  $$\Sigma^{c^+} = \left\lbrace p \in \Sigma \mid X^{+}f(q) > 0,  X^{-}f(q) > 0 \right\rbrace $$ and $$\Sigma^{c^-} = \left\lbrace p \in \Sigma \mid X^{+}f(q) < 0,  X^{-}f(q) < 0 \right\rbrace ;$$
		\item[$\bullet$] \textbf{Sliding Region:} $\Sigma^{s} = \left\lbrace p \in \Sigma \mid X^{+}f(q) < 0, X^{-}f(q) > 0 \right\rbrace $;
		\item[$\bullet$] \textbf{Escaping Region:} $\Sigma^{e} = \left\lbrace p \in \Sigma \mid X^{+}f(q) > 0, X ^{-}f(q) < 0 \right\rbrace $.
	\end{enumerate}
	
	Any $q \in \Sigma$ such that $X^{+}f(q).X^{-}f(q) = 0$ is called a boundary singularity. The boundary singularities can be of two types: (i) an equilibrium of $X^{+}$ or $X^{-}$ over $\Sigma$ or (ii) a point where a trajectory of $X^{+}$ or $X^{-}$ is tangent to $\Sigma$ (and it is not an equilibrium of  $X^{+}$ or $X^{-}$). In the second case, $X^{+}f(p) = 0$ means that the trajectory of $X^{+}$ passing through $p$ is tangent to $\Sigma$  and we say that $p$ is a \textbf{tangential singularity} (or tangency
	point) of $X^{+}$. The same for $X^{-}$. The set composed by the tangential singularities is denoted by $\Sigma^t$.

%	A tangential singularity $p \in \Sigma$ is a \textbf{fold point} of $X^{+}$ if $X^{+}f(p) = 0$, but $(X^{+})^2f(p) \neq 0 .$ Moreover, $p \in \Sigma$ is a visible (respectively, invisible) fold point of $X^{+}$ if $X^{+}f(p) = 0$ and $(X^{+})^2f(p) > 0$ (respectively, $(X^{+})^2f(p) < 0$). Analogously for $X^{-}$ reversing the last two inequalities.  When $p$ is a fold point for both
%	$X^{+}$ and $X^{-}$, we say that $p$ is a \textbf{fold–fold singularity} or \textbf{two-fold singularity}. A two-fold is called
%	\begin{enumerate}
%		\item[$1.$] visible-visible, if it is a visible tangency for both $X^{+}$ and $X^{-}$;
%		\item[$2.$] invisible-invisible, if it is an invisible tangency for $X^{+}$ and $X^{-}$;
%		\item[$3.$] visible-invisible, whether it is a visible tangency for $X^{+}$ and an invisible tangency for $X^{-}$ or vice versa.
%	\end{enumerate} 
	
	The trajectories of a PSVF passing through a crossing point are defined as the concatenation of the trajectories of $X^{+}$ and $X^{-}$ by that point since the vector fields $X^{+}$ and $X^{-}$ point to the same direction. However, in the sliding and escaping regions, we need to define an auxiliary vector field called sliding vector field, which is a convex linear combination of $X^{+}(p)$ and $X^{-}(p)$ in such a way that $Z^s$ is tangent to $\Sigma$ in the cone generated by $X^{+}(p)$ and $X^{-}(p).$ %	\begin{definition}
%		\label{deslizante}
%		Given a point $p \in \Sigma^{s} \cup \Sigma^{e} \subset \Sigma $, we define the \textbf{sliding vector field} at $p$ as the vector field $Z^s(p) = m - p$ with $m$ being the point of the segment joining $p + X^{+}(p)$ and $y + X^{-}(p)$ such that $m - p$ is tangent to $\Sigma$. See Figure \ref{campodeslizante}.
%	\end{definition}
	Explicitly, 
	
	\begin{definition}
	The \textbf{sliding vector field} is given by: 
		\begin{equation*}
		Z^s(q) = \frac{X^{-}f(q)X^{+}(q) - X^{+}f(q)X^{-}(q)}{X^{-}f(q) - X^{+}f(q)} \quad \textrm{for all} \; q \in \Sigma^{s} \cup \Sigma^{e} .
	\end{equation*}
	\end{definition}
	
	Moreover, when it is well defined (i.e., $\Sigma^{s} \cup \Sigma^{e} \neq \emptyset$), the sliding vector field can be extend to $\overline{\Sigma^{s}} \cup \overline{\Sigma^{e}}.$ A point $p \in \Sigma^{s} \cup \Sigma^{e}$ such that $Z^s(p) = 0$ is called a \textbf{pseudo equilibrium } of $Z$. 

\vspace{.5cm}

	\begin{definition}
		\label{definicao trajetorias locais}
The local trajectory (orbit) $\phi_Z (t, p)$ of a PSVF $Z = (X^{+}, X^{-})$ through a small neighborhood of $p \in U$ is defined as follows: \begin{enumerate}
\item[$(i)$] For $p \in \Sigma^{+} \setminus \Sigma$ and $p \in \Sigma^{-}\setminus\Sigma$ the trajectory is given by $\phi_Z (t, p) = \phi_{X^{+}} ( t,p)$ and $\phi_Z (t,p) = \phi_{X^{-}} (t,p)$ respectively. 
\item[$(ii)$] For $p \in \Sigma^{c^{+}}$ and taking the origin of time at $p$ the trajectory is defined as $\phi_Z (t, p) = \phi_{X^{-}} (t,p)$ for $t \leq 0$ and $\phi_Z (t,p) = \phi_{X^{+}} (t,p)$ for $t \geq 0.$ If $p \in \Sigma^{c^{-}}$ the definition is the same reversing the time;
\item[$(iii)$] For $p \in \Sigma^{e}$ and taking the origin of time at $p$, the trajectory is defined as $\phi_Z (t, p) = \phi_{Z^ s} (t,p)$ for $t \leq 0$ and $\phi_Z (t, p) $ is either $\phi_{X^{+}} (t, p)$ or $\phi_{X^{-}} (t, p)$ or $\phi_{Z^s} (t, p)$ for $t\geq 0.$ For $p \in \Sigma^s$ the definition is the same  reversing the time;
\item[$(iv)$] For $p$ being a  tangential singularity and taking the origin of time at $p$ the trajectory is defined as $\phi_Z (t, p) = \phi_1 (t,p)$ for $t \leq 0$ and $\phi_Z (t,p) = \phi_2 (t,p)$ for $t \geq 0,$ where each $\phi_1, \phi_2$ is either $\phi_{X^{+}}$ or $\phi_{X^{-}}$ or $\phi_{Z^s}$.% or $\phi_{Z^T}$ ;
%\item[$(v)$] For $p \in V \subset \mathbb{R}^2$ a singular tangency point, $\phi_Z (t, p) = p$ for all $t \in \mathbb {R}.$ In other cases, taking the origin of time at $p$, the trajectory is defined as $\phi_{z}(t,p) = \phi_{Z^T}(t,p)$.% for $t \in I .$
		\end{enumerate}
	\end{definition}

	\begin{definition}
		\label{def1}
A global trajectory $\Gamma_Z (t, p_0)$ of a PSVF is a concatenation of local trajectories. Moreover, a maximal trajectory  is a global trajectory that can not be extended to any other global trajectories by joining local ones, that is, if $\widetilde{\Gamma_Z} $ is a global trajectory containing $\Gamma_Z$ then $\Gamma_Z = \widetilde{\Gamma_Z}$. In this case, we call $I^{\Gamma_{Z}}_{max} = (\tau^{-}(\Gamma_{Z},p_0), \tau^{+} (\Gamma_{Z},p_0))$ the maximal interval of existence of the solution $\Gamma_Z$.% A global trajectory is a \textbf{positive} (respectively, \textbf{negative}) global trajectory if $t > 0$ (respectively, $t < 0)$ and $t_0 = 0$.
	\end{definition}

\begin{remark}
 The maximal interval of existence of a solution of $Z$ may not cover the interval $(- \infty , \infty)$, that is, $\tau^{\pm} (\Gamma_{Z},p_0)$ could assume finite values. When there is no danger of confusion, we will prefer use the notation $I_{max} = (\tau^{-}(p_0), \tau^{+} (p_0))$ instead of $I^{\Gamma_{Z}}_{max} = (\tau^{-}(\Gamma_{Z},p_0), \tau^{+} (\Gamma_{Z},p_0))$.
\end{remark}

The next example proves that, even if $X^{\pm}$ satisfy the hypotheses of Theorem \ref{teo shadowing para suaves}, the shadowing property can not be satisfied by the PSVF $Z=(X^+,X^-)$. 

%\begin{definition}\label{def star-shaped} Let $L$ be a set of indexes and $T_l > 0$, with $l \in L$.
%	We say that a set $\mathcal{A}$ is \textbf{star-shaped}  with center in $p \in \mathcal{A}$ when  every point $q \in \mathcal{A}$ is such that $q = \varphi_{Z}(t^{1}_q,p)=\varphi_{Z}(-t^{2}_q,p)$ with $0< t^{1}_q < T_{l_{1}}$ and  $- T_{l_{2}}< - t^{2}_q < 0$  for some $Z$-trajectory $\varphi_Z$, some $l_{1}, l_{2} \in L$ and $p = \varphi_{Z}(T_{l_1},p)=\varphi_{Z}(-T_{l_2},p)$.
%\end{definition}

\begin{example}\label{exemplo nao da certo}
	Consider the PSVF with phase portrait given in Figure \ref{Fig nao sombreamento}. Observe that, for all $\delta>0$, there exists a $\delta$-$Z$-pseudo orbit $\{ y_1,y_2, y_3 \}$. However, no true $Z$-orbit passes through $\varepsilon$-neighborhoods of the points $y_1,y_2,y_3$ simultaneously. In fact, given $N_{1}^{\varepsilon}$ a  $\varepsilon$-neighborhood of $y_1$ we observe that the local $Z$-trajectory $\gamma_1$ passing through $y_1$ divides it into two components $N_{1,+}^{\varepsilon}$ and  $N_{1,-}^{\varepsilon}$ in such a way that  $N_{1}^{\varepsilon} = N_{1,+}^{\varepsilon} \cup \gamma_1 \cup  N_{1,-}^{\varepsilon}$ is a disjoint union.  Just points of $\gamma_1 \cup  N_{1,-}^{\varepsilon}$ reaches $N_{2}^{\varepsilon}$, a $\varepsilon$-neighborhood of $y_2$. 
	
	Now, repeating the partition done before, $N_{2}^{\varepsilon} = N_{2,+}^{\varepsilon} \cup \gamma_2 \cup  N_{2,-}^{\varepsilon}$ is a disjoint union of $N_{2}^{\varepsilon}$, where $\gamma_2$ is a local $Z$-trajectory passing through $y_2$. Observe that, just points of $N_{2,-}^{\varepsilon}$ reaches $N_{3}^{\varepsilon}$, a $\varepsilon$-neighborhood of $y_3$. 
	
	Since $\gamma_1 \cup  N_{1,-}^{\varepsilon}$ reaches $N_{2}^{\varepsilon}$ in $N_{2,+}^{\varepsilon}$, there is not a true $Z$-orbit passing through $N_{1}^{\varepsilon}$, $N_{2}^{\varepsilon}$ and $N_{3}^{\varepsilon}$. This proves that the existence of $\delta$-$Z$-pseudo orbits is not enough to ensure the existence of $\varepsilon$-shadowing.

	\begin{figure}[h]
		\begin{center}
			\begin{overpic}[width=3in]{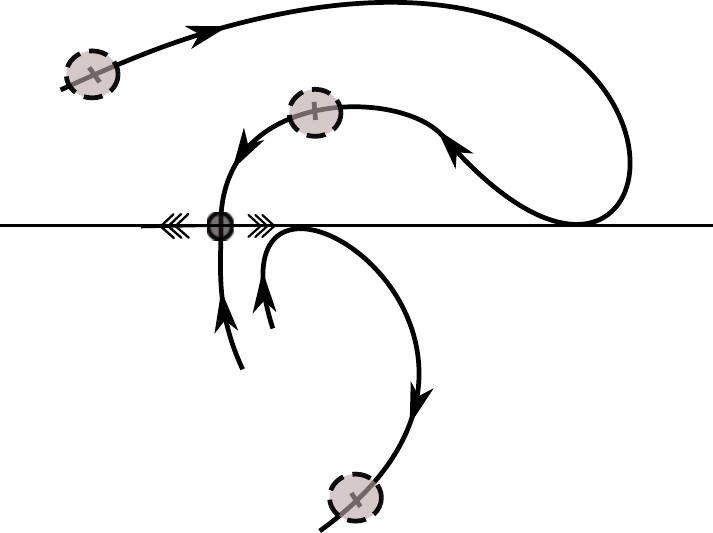}
				%\begin{overpic}[grid,tics=10,width=3in]{Fig-Nao-Sombreamento.pdf}
				\put(12,58){$y_1$}
				\put(40,53){$y_2$}
				\put(54,4){$y_3$}
				\put(10,69){$N^{\varepsilon}_{1,+}$}
				\put(17,62){$N^{\varepsilon}_{1,-}$}
				\put(42,64){$N^{\varepsilon}_{2,+}$}
				\put(47,53){$N^{\varepsilon}_{2,-}$}
			\end{overpic}
		\end{center}
		\caption{Figure of Example \ref{exemplo nao da certo}. Observe that there is not a real trajectory of the PSVF $Z$ connecting a small neighborhood of $y_1$ to a small neighborhood of $y_3$.}\label{Fig nao sombreamento}
	\end{figure}

\end{example}

To overcome this problem and obtain a $\varepsilon$-shadowing for PSVFs, we need to impose additional assumptions on the PSVFs. In this sense, considering the non-uniqueness of deterministic trajectories passing though a point $p \in \Sigma$,  we present the following set  that will play a central role in the paper:

\begin{definition}\label{def recursive point} A point  $p$ of the domain of the PSVF $Z = (X^+,X^-)$ is a \textbf{recursive point} when
 every $Z$-trajectory passing through $p$ return to $p$, both in forward and backward time.

Let $\mathcal{A}$ be the subset of the domain of $Z$ composed by all points that belong to, at least, one $Z$-trajectory passing through the recursive point $p$. We call  $\mathcal{A}$ the \textbf{saturation of  $p$}.

\end{definition}

%In other words, a star-shaped set $\mathcal{A}$ is composed by all points belonging to closed trajectories passing through a center point $p \in \mathcal{A}$.

\begin{remark}
	Observe that, if $p$ is a recursive point and $\mathcal{A}$  is the saturation of  $p$, there exists a  set  of indexes $L$ and $T_l > 0$, with $l \in L$ such that   every point $q \in \mathcal{A}$ is such that $q = \varphi_{Z}(t^{1}_q,p)=\varphi_{Z}(-t^{2}_q,p)$ with $0< t^{1}_q < T_{l_{1}}$ and  $- T_{l_{2}}< - t^{2}_q < 0$  for some $Z$-trajectory $\varphi_Z$, some $l_{1}, l_{2} \in L$ and $p = \varphi_{Z}(T_{l_1},p)=\varphi_{Z}(-T_{l_2},p)$. This ensures that there is a closed trajectory passing through $q$, with return time equals to $t^{1}_q + t^{2}_q$.
\end{remark}

Before moving on, let us show examples concerning the existence of recursive points and its saturation.

\begin{example}\label{exemplo feijao}
	In  \cite{BCEchaotic} it was firstly  considered   the following planar PSVF:
	\begin{equation}\label{eq feijao}
		Z(x,y)= \left\lbrace \begin{array}{ll}
			X(x,y) = (	1, -2x)
			, & y \geq 0 \\
			Y(x,y) = \left(  -2,- 4 x^3 + 2 x  \right) , & y \leq 0.
		\end{array} \right. \end{equation}
	This PSVF presents a non trivial minimal set with positive measure (see the main result of \cite{BCEchaotic}).  Moreover, in \cite{JDE-Entropia-2023} it was proved that the entropy in this non trivial minimal set is infinite. Also, in 	 \cite{Tiago-ClosingLema} and \cite{CarvalhoEuzebio-EJQTDE} the same set is studied in distinct contexts. 
	
	Using Definition \ref{def recursive point}, the set in Figure \ref{Fig feijao} is the saturation of the recursive point placed at the origin. 
	
\begin{figure}[h]
	\begin{center}
		\begin{overpic}[width=2.5in]{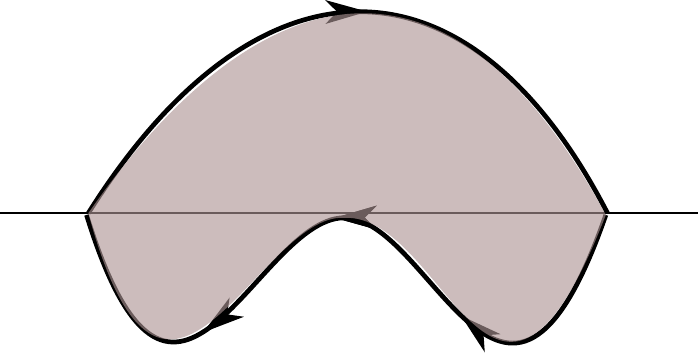}
			%\begin{overpic}[grid,tics=10,width=4.5in]{fig-dinamicaglobal.pdf}
			\put(49,22){$0$}
			%\put(101,37){$T^*$}\put(20,11){$V_I$}\put(50,10){$\Sigma$} \put(61,58){$r^{V}_{X}$}\put(30,33){$r^{V}_{Y}$}\put(52,51){$C_X$}\put(32,44){$C_Y$}\put(45,45){$TF$}\put(60,29){$S(r^{V}_{Y})$}\put(35,63){$S(r^{V}_{X})$}%\put(,){$$}\put(,){$$}\put(,){$$}
		\end{overpic}
	\end{center}
	\caption{Saturation of a recursive point placed at the origin,  given by System \eqref{eq feijao} (see also  \cite{BCEchaotic}). Note that the saturation set has positive Lebesgue measure in $\R^n$.}\label{Fig feijao}
\end{figure}

\end{example}

\begin{example}\label{exemplo petalas e figura oito}
	Consider either the planar PSVF considered in Section 5 of \cite{JDE-Entropia-2023} (see Figure \ref{Fig petalas}) or the PSVF  $Z(x,y) = (X(x,y),Y(x,y))$ presented in the end of \cite{CarvalhoEuzebio-EJQTDE}, where  $X(x, y) = (1, 4x(1 -x^2))$, $Y(x, y) =
(-1, 4x(1 - x^2))$ and the switching manifold being the $x$-axis (see Figure \ref{Fig caos trivial}). The set of points belonging to the trajectories passing through  the origin is a saturation of the recursive point placed at the origin. 
	
	\begin{figure}[h]
		\begin{center}
			\begin{overpic}[width=2.5in]{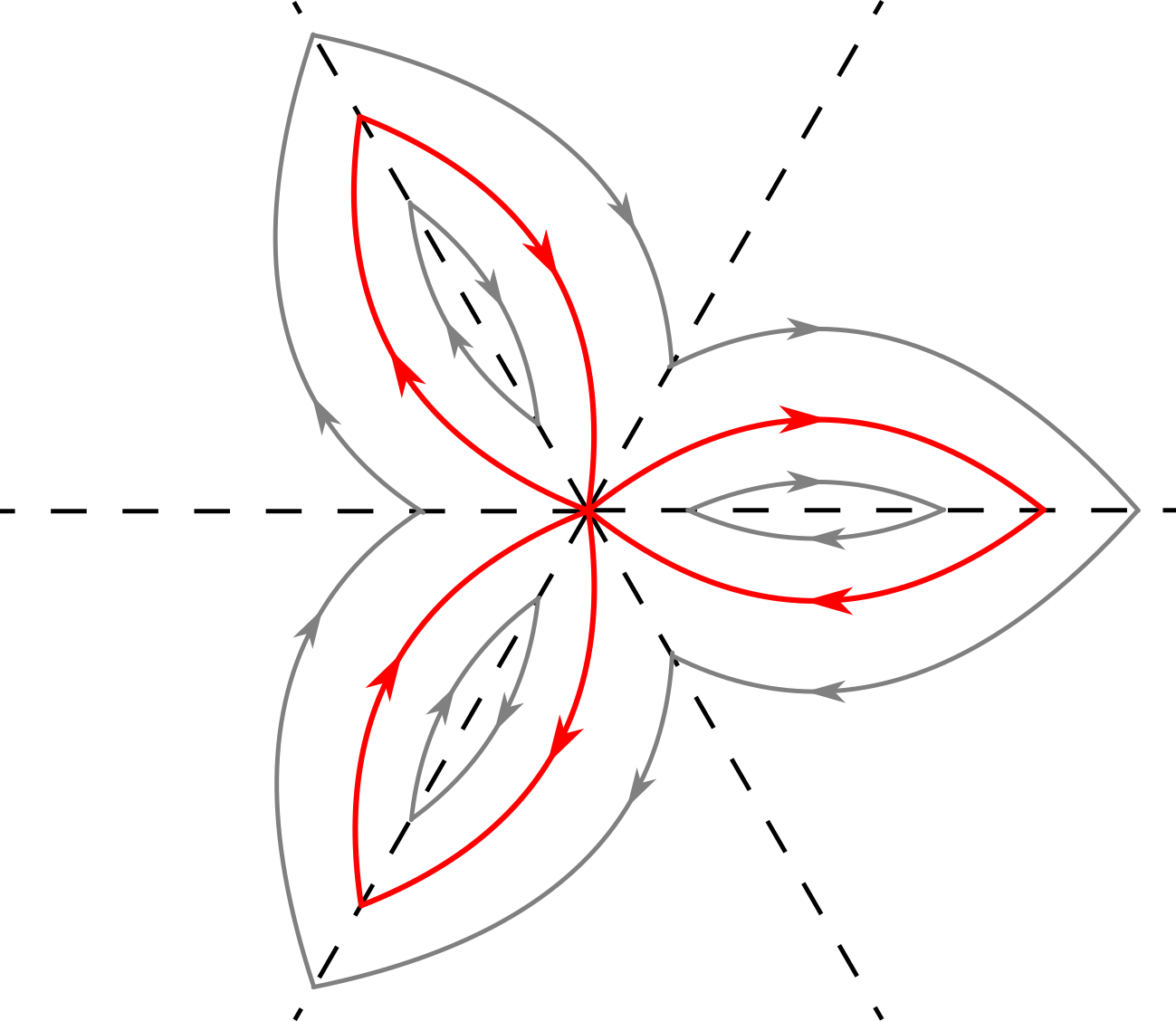}
				%\begin{overpic}[grid,tics=10,width=4.5in]{fig-dinamicaglobal.pdf}
				%\put(50,92){$T$}\put(101,37){$T^*$}\put(20,11){$V_I$}\put(50,10){$\Sigma$} \put(61,58){$r^{V}_{X}$}\put(30,33){$r^{V}_{Y}$}\put(52,51){$C_X$}\put(32,44){$C_Y$}\put(45,45){$TF$}\put(60,29){$S(r^{V}_{Y})$}\put(35,63){$S(r^{V}_{X})$}%\put(,){$$}\put(,){$$}\put(,){$$}
			\end{overpic}
		\end{center}
		\caption{Saturation of the recursive point placed at the origin,  given in Section 5 of \cite{JDE-Entropia-2023}. Note that the saturation set has  Lebesgue measure equals to zero in $\R^n$.}\label{Fig petalas}
	\end{figure}
	
		\begin{figure}[h]
		\begin{center}
			\begin{overpic}[width=2in]{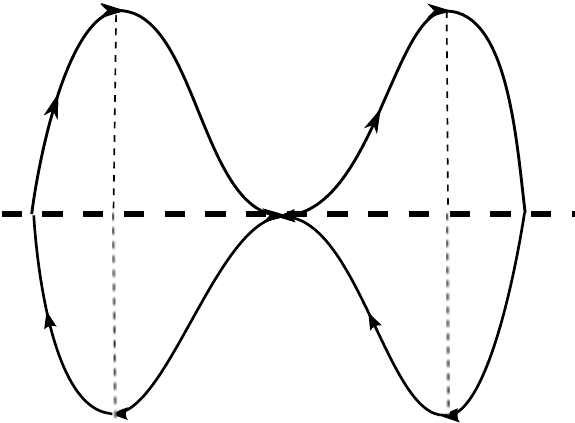}
				%\begin{overpic}[grid,tics=10,width=4.5in]{fig-dinamicaglobal.pdf}
				%\put(50,92){$T$}\put(101,37){$T^*$}\put(20,11){$V_I$}\put(50,10){$\Sigma$} \put(61,58){$r^{V}_{X}$}\put(30,33){$r^{V}_{Y}$}\put(52,51){$C_X$}\put(32,44){$C_Y$}\put(45,45){$TF$}\put(60,29){$S(r^{V}_{Y})$}\put(35,63){$S(r^{V}_{X})$}%\put(,){$$}\put(,){$$}\put(,){$$}
			\end{overpic}
		\end{center}
		\caption{Saturation of the recursive point placed at the origin, given in \cite{CarvalhoEuzebio-EJQTDE}. Note that the saturation set has  Lebesgue measure equals to zero in $\R^n$.}\label{Fig caos trivial}
	\end{figure}

\end{example}

%
%\begin{definition}
%	A pseudo trajectory $\{ y_i \}_{i=0}^{N}$ with timesteps  $\{ h_i \}_{i=0}^{N-1}$ is $\varepsilon$-shadowed by an real trajectory $\{ x_i \}_{i=0}^{N}$ with timesteps  $\{ \tau_i \}_{i=0}^{N-1}$ when $x_{i+1} = \varphi_{Z}(\tau_i,x_i)$ for some $Z$-trajectory, where $\| y_i - x_i \| \leq \varepsilon$ and $|h_i - \tau_i| \leq \varepsilon$. 
%\end{definition}

\section{Main result}\label{secao main result}

Now we state the main result of the paper:

\begin{theorem}[Shadowing-Like Theorem  for PSVFs defined in $\R^n$]
\label{teoShadowing} 
Consider an $n$-dimensional  PSVF $Z=(X^{+},X^{-}) \in \mathcal{Z}^r$ defined in the saturation $\mathcal{A}$ of the recursive point $p^*$ of Z. Suppose that both $X^{+}$ and $X^{-}$ satisfy the hypotheses of   Theorem \ref{teo shadowing para suaves}. 

Then, given  a 
sufficiently small $\varepsilon > 0$, there is a $\delta > 0$  such that
 any $\delta$-$Z$-pseudo-orbit
$\{ y_k \}_{k=0}^{N} \subset \mathcal{A}$, with $y_k \notin \Sigma$ for all $k \in \{ 0, 1, \hdots, N \}$, is $\varepsilon$-shadowed by a  true $Z$-orbit containing
points $\{ x_k \}_{k=0}^{N}$. %Moreover, there is only one such sequence satisfying the
%orthogonality condition $\left\langle x_k - y_k, Z(y_k) \right\rangle = 0 $. 
\end{theorem}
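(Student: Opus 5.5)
The plan is to split the pseudo-orbit $\{y_k\}_{k=0}^N$ into maximal consecutive blocks lying in the same open half-space. I will use the classical result (Theorem \ref{teo shadowing para suaves}) on each block, and use the recursive point $p^*$ to glue the resulting shadowing arcs into one true $Z$-orbit. Since $y_k\notin\Sigma$ for every $k$, each $y_k$ lies in $\Sigma^+\setminus\Sigma$ or in $\Sigma^-\setminus\Sigma$. Accordingly, I write $\{0,\dots,N\}=I_1\cup\dots\cup I_m$, where each $I_j$ is a maximal interval of indices with all $y_k$, $k\in I_j$, on the same side.

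Step 1 (local shadowing in each regime). Apply Theorem \ref{teo shadowing para suaves} separately to $X^+$ and to $X^-$ with the given $\varepsilon$, obtaining $\delta^\pm$, and set $\delta=\min\{\delta^+,\delta^-\}$. On a block $I_j$ where consecutive points stay on one side, the chain is a $\delta$-pseudo orbit of the corresponding smooth field $X^\pm$, because there $\varphi_Z=\varphi_{X^\pm}$ as long as the arc of trajectory does not meet $\Sigma$. The theorem then gives points $x_k$, $k\in I_j$, lying on one true $X^\pm$-orbit, with $\|x_k-y_k\|\le\varepsilon$ and $|\tau_k-h_k|\le\varepsilon$. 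If some arc $\varphi(h,y_k)$ crosses $\Sigma$ through $\Sigma^c$, I handle it by splitting it at the crossing point. The concatenation rule in Definition \ref{definicao trajetorias locais}(ii) then makes the true orbit a $Z$-orbit, provided $\varepsilon$ is small compared with the distance of the crossing point to $\Sigma^t$. This is the point where the hypothesis \emph{sufficiently small $\varepsilon$} is used.

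Step 2 (gluing blocks via the saturation). At a transition between $I_j$ and $I_{j+1}$ the shadowing orbits obtained in Step 1 need not connect; this is exactly the obstruction in Example \ref{exemplo nao da certo}. Here I use that every point of $\mathcal{A}$ lies on a closed $Z$-trajectory through $p^*$ (the Remark after Definition \ref{def recursive point}). Let $\gamma_j$ denote the shadowing orbit of block $I_j$. I would choose the shadowing orbits so that each $\gamma_j$ is a $Z$-trajectory through $p^*$, i.e.\ contained in $\mathcal{A}$. Since all $Z$-trajectories through $p^*$ return to $p^*$, a true $Z$-orbit can leave along $\gamma_j$, return to $p^*$, and continue along $\gamma_{j+1}$. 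The non-uniqueness at $p^*$ allowed by Definition \ref{definicao trajetorias locais}(iii)--(iv) is precisely what lets me switch from $\gamma_j$ to $\gamma_{j+1}$. The resulting concatenation is a global $Z$-trajectory in the sense of Definition \ref{def1}, and it passes through all the $x_k$.

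Main obstacle. The hardest point is reconciling the time estimate $|\tau_k-h_k|\le\varepsilon$ with the detour through $p^*$, and choosing the smooth shadowing orbit inside $\mathcal{A}$. The detour adds a return time of order $T_l$ to the step at a block transition. I would try first to absorb it by placing $p^*$ itself, or its sliding segment, within $\varepsilon$ of the transition arc. Concretely, I would argue that at a transition the pseudo-arc must pass near $\Sigma$, and in $\mathcal{A}$ the trajectories reaching $\Sigma$ near there funnel (through sliding) to $p^*$. This would let me re-parametrize so that the switch at $p^*$ costs no extra time. If this fails in general, I would weaken the conclusion: require the time estimate only within blocks, and allow an additional return time at block transitions, which is what the name \emph{Shadowing-Like} suggests the author intends.
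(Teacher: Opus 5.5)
Your skeleton is the paper's: cut the chain into sub-chains, apply Theorem \ref{teo shadowing para suaves} to each sub-chain with $X^+$ or $X^-$, and glue consecutive shadowing arcs by the detour through $p^*$ that the saturation provides. This is the paper's $\gamma_1$, with $\varphi_Z(t^+_1,x^+_{N^+})=\varphi_Z(-t^-_1,x^-_0)=p^*$. You do not need to \emph{choose} the arcs inside $\mathcal{A}$: $Z$ is defined on $\mathcal{A}$, so every shadowing point already lies on a closed trajectory through $p^*$.

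The genuine gap is your cut rule in Step 1. Cutting only where the $y_k$ change side leaves blocks in which a step $\varphi_Z(h_k,y_k)$ leaves $\Sigma^+$ and comes back. For example, in the figure-eight of Figure \ref{Fig caos trivial}, a long step starting on the upper right arc crosses at $(\sqrt2,0)$, runs along the lower arc, and re-enters $\Sigma^+$ through the origin. On such a block $\varphi_Z(h_k,y_k)\neq\varphi_{X^+}(h_k,y_k)$, so the chain is not an $X^+$-pseudo orbit and the classical theorem does not apply. Splitting the step at the crossing point does not help. You would need the $X^+$-shadowing arc and the $X^-$-shadowing arc to meet $\Sigma^c$ at the same point, and the classical theorem, used as a black box, gives no control over the endpoints of the shadowing orbit. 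The paper avoids this by cutting at \emph{every} step whose arc meets $\Sigma$ (its case (ii), two compounds both in $\Sigma^+$). It treats those junctions exactly like side changes, through $p^*$. You should do the same.

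On the time estimate your diagnosis is correct, and the paper's proof does not resolve it either. At each junction the transit time becomes $t^+_1+t^-_1$, and $|\tau-h|\le\varepsilon$ is never checked there. Your remedy, a sliding funnel to $p^*$ near each transition, does not follow from the hypotheses, which say nothing about where $p^*$ sits relative to the junctions. In the figure-eight example $\Sigma$ has no sliding points at all, since $X^+f\,X^-f=16x^2(1-x^2)^2\ge0$. A junction at $(\sqrt2,0)$ glued through $p^*$ then picks up an extra full turn around the right lobe, although the direct crossing orbit needs no detour.

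So what this argument, yours or the paper's, actually delivers is your weakened conclusion:
\begin{itemize}
\item $\|x_k-y_k\|\le\varepsilon$ for all $k$;
\item $|\tau_k-h_k|\le\varepsilon$ within sub-chains;
\item an uncontrolled return time at each junction.
\end{itemize}
The full estimate at transversal crossings would need a local matching argument across $\Sigma^c$ rather than the detour. Your choice $\delta=\min\{\delta^+,\delta^-\}$ is correct; the paper's $\max$ is a slip.
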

\begin{proof}
Take a  sufficiently small $\varepsilon > 0$ and consider $\{ y_k \}_{k=0}^{N} \subset \mathcal{A}$ a $\delta$-$Z$-pseudo-orbit, with $y_k \notin \Sigma$ for all $k \in \{ 0, 1, \hdots, N \}$. 

If the trajectories connecting the successive $y_i$ do not intercept $\Sigma$, we conclude that either $\{ y_k \}_{k=0}^{N} \subset \Sigma^+$ or $\{ y_k \}_{k=0}^{N} \subset \Sigma^-$. In both cases, since both $X^{+}$ and $X^{-}$ satisfy  Theorem \ref{teo shadowing para suaves}, we obtain that $\{ y_k \}_{k=0}^{N}$ is $\varepsilon$-shadowed by a true orbit $\{ x_k \}_{k=0}^{N}$. % and the orthogonality condition also is verified. 

 If the trajectories connecting the successive $y_i$  intercept $\Sigma$, the situation is a bit complex. In fact,  $\{ y_k \}_{k=0}^{N}$ can oscillate between $\Sigma^+$ and  $\Sigma^-$ or even $\{ y_k \}_{k=0}^{N}$ is composed by subsets $\{ y_k \}_{k=0}^{N} = \{ y_{k} \}_{k=0}^{N_{k_1}} \cup \{ y_{k} \}_{k=N_{k_1}}^{N_{k_2}} \cup \hdots \cup \{ y_{k} \}_{k=N_{k_{n}}}^{N}$ in such a way that the trajectory connecting $y_{k_{m}}$ and the $\delta$-neighborhood of $y_{k_{m+1}}$ intersects $\Sigma$, for $m=1,2, \hdots, n$. In the sequel, we will detail the proof for two representative cases: $(i)$:   $\{ y_k \}_{k=0}^{N} = \{ y_{k} \}_{k=0}^{N_{k_1}} \cup \{ y_{k} \}_{k=N_{k_1}}^{N} $, where $\{ y_{k} \}_{k=0}^{N_{k_1}} \subset  \Sigma^+$ and  $\{ y_{k} \}_{k=N_{k_1}}^{N} \subset \Sigma^-$ and $(ii)$ $\{ y_k \}_{k=0}^{N} = \{ y_{k} \}_{k=0}^{N_{k_1}} \cup \{ y_{k} \}_{k=N_{k_1}}^{N} $, where both  $\{ y_{k} \}_{k=0}^{N_{k_1}}$ and  $\{ y_{k} \}_{k=N_{k_1}}^{N}$ are contained in $ \Sigma^+$ but the trajectory connecting $y_{N_{k_{1}}}$ and the $\delta$-neighborhood of $y_{N_{k_{1}}+1}$ intersects $\Sigma$. The other cases are very similar and the proofs will be omitted.

$\bullet$ \textbf{Connection between a compound in $\Sigma^+$ and a compound in $\Sigma^-$:}  Call $\{ y_{k} \}_{k=0}^{N_{k_1}} =  \{ y^{+}_k \}_{k=0}^{N^+}$ and $\{ y_{k} \}_{k=N_{k_1}}^{N} =  \{ y^{-}_k \}_{k=0}^{N^-}$.

Since $X^+$ satisfies Theorem \ref{teo shadowing para suaves}, there exists $\delta^+ > 0$ such that  any $\delta^+$-$X^+$-pseudo-orbit
$\{ y^{+}_k \}_{k=0}^{N^+} \subset \mathcal{A}$, with $y^{+}_k \notin \Sigma$ for all $k \in \{ 0, 1, \hdots, N^+ \}$, is $\varepsilon$-shadowed by a  true orbit of $X^+$ containing
points $\{ x^{+}_k \}_{k=0}^{N^+}$. %Moreover, there is only one such sequence satisfying the
%orthogonality condition $\left\langle x^{+}_k - y^{+}_k, X^{+}(y^{+}_k) \right\rangle = 0 $. 
The same holds for $X^-$, changing the symbol "$+$" by "$-$".

Take $\delta  = max \{ \delta^-, \delta^+ \}$, $N=N^{-} + N^{+} + 1$ and consider the $\delta$-$Z$-pseudo orbit $\{ y_k \}_{k=0}^{N} = \{ y^{+}_0, y^{+}_1, \hdots, y^{+}_{N^{+}}, y^{-}_0, y^{-}_1, \hdots, y^{-}_{N^{-}} \}$.

Since $\mathcal{A}$ is the saturation of $p^*$,  there exist $t^{+}_{1} > 0$ and $t^{-}_{1} > 0$ such that $\varphi_{Z}(t^+,x^{+}_{N^{+}}) = \varphi_{Z}(-t^-,x^{-}_{0}) = p^* \in \mathcal{A}$ for some concatenation $\gamma_{1}$ of local $Z$-trajectories.

Then, using $\gamma_{1}$ to connect $x^{+}_{N^{+}}$ and $x^{-}_{0}$, we conclude the existence of a true orbit of $Z$ passing through   $\{ x_k \}_{k=0}^{N} = \{ x^{+}_0, x^{+}_1, \hdots, x^{+}_{N^{+}}, x^{-}_0, x^{-}_1, \hdots, x^{-}_{N^{-}} \}$.  

%The uniqueness of, for example $\{ x^{+}_k \}_{k=0}^{N^{+}}$, ensures the uniqueness of $\{ x_k \}_{k=0}^{N}$ such that $\left\langle x_k - y_k, Z(y_k) \right\rangle = 0 $. 

%Analogously, there exist $t^{+}_2 > 0$ and $t^{-}_2 > 0$ such that $\varphi_{Z}(t^{-}_2,y^{-}_{N^{-}}) = \varphi_{Z}(-t^{+}_2,y^{+}_{0}) = p_{2}^* \in \Sigma$ for some concatenation of local $Z$-trajectories.

$\bullet$ \textbf{Connection between a compound in $\Sigma^+$ and another compound in $\Sigma^+$:}  Again, call $\{ y_{k} \}_{k=0}^{N_{k_1}} =  \{ y^{+}_k \}_{k=0}^{N^+}$ and $\{ y_{k} \}_{k=N_{k_1}}^{N} =  \{ y^{-}_k \}_{k=0}^{N^-}$.

Since $X^+$ satisfies Theorem \ref{teo shadowing para suaves}, there exists $\delta^{+} > 0$ such that  any $\delta^+$-$X^+$-pseudo-orbit
$\{ y^{+}_k \}_{k=0}^{N^+} \subset \mathcal{A}$ (respectively,  $\{ y^{-}_k \}_{k=0}^{N^-}$), with $y^{+}_k \notin \Sigma$ for all  $k \in \{ 0, 1, \hdots, N^+   \}$ (respectively, $y^{-}_k$ and   $N^-$), is $\varepsilon$-shadowed by a  true orbit of $X^+$ containing
points $\{ x^{+}_k \}_{k=0}^{N^+}$ (respectively, $\{ x^{-}_k \}_{k=0}^{N^-}$). %Moreover, there is only one such sequence satisfying the
%orthogonality condition $\left\langle x^{\pm}_k - y^{\pm}_k, X^{+}(y^{\pm}_k) \right\rangle = 0 $.

Take $\delta = \delta^+$, $N=N^{-} + N^{+} + 1$ and consider the $\delta$-$Z$-pseudo orbit \linebreak $\{ y_k \}_{k=0}^{N} = \{ y^{+}_0, y^{+}_1, \hdots, y^{+}_{N^{+}}, y^{-}_0, y^{-}_1, \hdots, y^{-}_{N^{-}} \}$.

Since $\mathcal{A}$ is the saturation of $p^*$,  there exist $t^{+}_{1} > 0$ and $t^{-}_{1} > 0$ such that $\varphi_{Z}(t^+,x^{+}_{N^{+}}) = \varphi_{Z}(-t^-,x^{-}_{0}) = p^* \in \mathcal{A}$ for some concatenation $\gamma_{1}$ of local $Z$-trajectories.

Then, using $\gamma_{1}$ to connect $x^{+}_{N^{+}}$ and $x^{-}_{0}$, we conclude the existence of a true orbit of $Z$ passing through   $\{ x_k \}_{k=0}^{N} = \{ x^{+}_0, x^{+}_1, \hdots, x^{+}_{N^{+}}, x^{-}_0, x^{-}_1, \hdots, x^{-}_{N^{-}} \}$.  

%The uniqueness of, for example $\{ x^{+}_k \}_{k=0}^{N^{+}}$, ensures the uniqueness of $\{ x_k \}_{k=0}^{N}$ such that $\left\langle x_k - y_k, Z(y_k) \right\rangle = 0 $. 

\end{proof}

\begin{remark}
	The PSVFs  of Examples \ref{exemplo feijao} and  \ref{exemplo petalas e figura oito} satisfy Theorem \ref{teoShadowing}.
\end{remark}

\section{Extended shadowing theorems}\label{secao estendidos}

While Theorem \ref{teoShadowing} extends to PSVFs the classical Shadowing Theorem, the dynamics around a PSVF is even complex and a more in-depth study deserves to be done to reveal some typical behavior.

In the sequel we will write a sequence of theorems that will break, little by little, the similarities with the classic case. We invite the reader to this journey, where prejudices need to be put aside.

\begin{theorem}
	\label{teoShadowingAnexo1} 
	Consider the same hypotheses of Theorem \ref{teoShadowing}. 
 	Then, given  a 
	sufficiently small $\varepsilon > 0$, there is a $\delta > 0$  such that given two distinct $\delta$-$Z$-pseudo-orbits
		$\{ y_k \}_{k=0}^{N} \subset \mathcal{A}$ and $\{ z_m \}_{m=0}^{M} \subset \mathcal{A}$, with $y_k,z_m \notin \Sigma$ for all $k,m$, the set  $\{ y_k \}_{k=0}^{N} \cup \{ z_m \}_{m=0}^{M}$  is $\varepsilon$-shadowed by a  true $Z$-orbit containing
		points $\{ x_k \}_{k=0}^{N} \cup \{ w_m \}_{m=0}^{M} $. %Moreover, there is only one such sequence satisfying the
	%	orthogonality condition $\left\langle x_k - y_k, Z(y_k) \right\rangle = 0 $ and  $\left\langle w_m - z_m, Z(z_m) \right\rangle = 0 $. 
	The same holds taking a finite  arbitrary  number of $\delta$-$Z$-pseudo-orbits.

\end{theorem}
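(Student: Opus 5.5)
The plan is to reduce to Theorem \ref{teoShadowing} and then glue the shadowing orbits together through the recursive point $p^*$, exactly as the connection steps in that proof do. Fix a sufficiently small $\varepsilon>0$ and let $\delta>0$ be the constant given by Theorem \ref{teoShadowing}. This $\delta$ comes from $\delta^\pm$ of Theorem \ref{teo shadowing para suaves} applied to $X^\pm$, so it depends only on $\varepsilon$ and $Z$, not on the particular pseudo-orbit. Let $\{ y_k \}_{k=0}^{N}$ and $\{ z_m \}_{m=0}^{M}$ be two $\delta$-$Z$-pseudo-orbits in $\mathcal{A}$ avoiding $\Sigma$.

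First, apply Theorem \ref{teoShadowing} to each pseudo-orbit separately. This gives true $Z$-orbit arcs $\gamma_y$ through $\{ x_k \}_{k=0}^{N}$ and $\gamma_z$ through $\{ w_m \}_{m=0}^{M}$, with $\|y_k-x_k\|\le\varepsilon$, $\|z_m-w_m\|\le\varepsilon$, and timesteps $\varepsilon$-close to those of the pseudo-orbits.

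Second, connect $x_N$ to $w_0$ by a true $Z$-trajectory, with all points taken in $\mathcal{A}$. Since $x_N\in\mathcal{A}$, the saturation of $p^*$, the Remark after Definition \ref{def recursive point} provides a $Z$-trajectory and a time $t^1>0$ with $\varphi_Z(t^1,x_N)=p^*$. Likewise $w_0\in\mathcal{A}$ provides $t^2>0$ with $\varphi_Z(-t^2,w_0)=p^*$. Concatenating the arc from $x_N$ to $p^*$ with the arc from $p^*$ to $w_0$ gives a global trajectory in the sense of Definition \ref{def1}; non-uniqueness of trajectories through $p^*$ is exactly what allows us to choose the outgoing branch leading to $w_0$. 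The resulting concatenation $\gamma_y\ast\gamma_1\ast\gamma_z$ is a single true $Z$-orbit containing $\{ x_k \}\cup\{ w_m \}$, and it $\varepsilon$-shadows $\{ y_k \}\cup\{ z_m \}$. The connecting arc carries no shadowing requirement, because no pseudo-orbit step joins $y_N$ to $z_0$.

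For finitely many pseudo-orbits $\{ y^{(j)}_k \}$, $j=1,\dots,J$, I would use induction on $J$. Take the same $\delta$ for all of them, shadow each one by Theorem \ref{teoShadowing}, and insert a passage through $p^*$ between the last shadowing point of the $j$-th orbit and the first of the $(j+1)$-th. The main obstacle is the gluing step. The shadowing points $x_N$ and $w_0$ must actually lie in $\mathcal{A}$ for the recursive property to apply; the statement only places the pseudo-orbits in $\mathcal{A}$. I would handle this as Theorem \ref{teoShadowing} implicitly does, taking $Z$ defined only on $\mathcal{A}$ so that every true orbit produced there lies in $\mathcal{A}$. I would also check that a trajectory entering $p^*$ may leave along any branch. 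This follows from Definition \ref{def recursive point}: every trajectory through $p^*$ returns to $p^*$, so the branches of the saturation can be freely concatenated at $p^*$.
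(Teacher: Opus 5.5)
Your proposal is correct and follows essentially the same route as the paper: shadow each pseudo-orbit separately by Theorem \ref{teoShadowing}, then glue the two true orbits by a concatenation of $Z$-trajectories through the recursive point $p^*$, with the paper (like you) implicitly using that the shadowing points lie in $\mathcal{A}$. Your version is slightly cleaner in two ways: you use the uniform $\delta$ of Theorem \ref{teoShadowing} instead of the paper's $\delta=\max\{\delta^1,\delta^2\}$ (a maximum where a minimum is needed), and you correctly note that the junction between $y_N$ and $z_0$ imposes no pseudo-orbit or timestep condition, whereas the paper treats the concatenated sequence as a $\delta$-pseudo-orbit.
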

\begin{remark}
Theorem \ref{teoShadowingAnexo1} says that even if $\{ y_k \}_{k=0}^{N} \subset \mathcal{A}$ and $\{ z_m \}_{m=0}^{M} \subset \mathcal{A}$ are, a priori, \textbf{disjoint} $\delta$-pseudo orbits, it is possible to obtain a true orbit $\varepsilon$-shadowing both of them. This fact does not happens for  C$^2$ vector fields.
\end{remark}
\begin{proof}

Take a  sufficiently small $\varepsilon > 0$ and consider two distinct $\delta$-$Z$pseudo-orbits
$\{ y_k \}_{k=0}^{N} \subset \mathcal{A}$ and $\{ z_m \}_{m=0}^{M} \subset \mathcal{A}$, with $y_k,z_m \notin \Sigma$ for all $k,m$.   

As we said at the proof of Theorem \ref{teoShadowing}, there are a lot of possibilities for the distribution of the points composing the pseudo-orbits in $\Sigma^+$ and $\Sigma^-$.  Without loss of generality,  we will suppose that 
 $\{ y_k \}_{k=0}^{N} = \{ y^{+}_k \}_{k=0}^{N^+} \cup \{ y^{-}_k \}_{k=0}^{N^-}$, where $\{ y^{+}_k \}_{k=0}^{N^+} \subset  \Sigma^+$ and  $\{ y^{-}_k \}_{k=0}^{N^-} \subset \Sigma^-$ and  $\{ z_m \}_{m=0}^{M}= \{ z^{1}_{k} \}_{k=0}^{N^{1}} \cup \{ z^{2}_{k} \}_{k=0}^{N^{2}} \subset  \Sigma^+$ in such a way that the trajectory connecting $z^{1}_{N^{1}}$ and the $\delta$-neighborhood of $z^{2}_{0}$ intersects $\Sigma$.

Consider exactly the same construction and the same true $Z$-orbits that are $\varepsilon$-shadowing $\{ y_k \}_{k=0}^{N}$ and $\{ z_m \}_{m=0}^{M}$ presented in the proof of Theorem \ref{teoShadowing}. See the \textit{Connection between a compound in $\Sigma^+$ and a compound in $\Sigma^-$} and the \textit{Connection between a compound in $\Sigma^+$ and another compound in $\Sigma^+$}.

So, there exist $\delta^{1} > 0$ and $\delta^{2} > 0$ such that $\{ w^{1}_m \}_{m=0}^{N}$ is $\varepsilon$-shadowing the  $\delta^{1}$-pseudo orbit $\{ y_{k} \}_{k=0}^{N}$ and  $\{ w^{2}_m \}_{m=0}^{M}$ is $\varepsilon$-shadowing the  $\delta^{2}$-pseudo orbit $\{ z_{k} \}_{k=0}^{M}$. % in such a way that there is only one such sequences satisfying the
%orthogonality condition $\left\langle w^{1}_k - y_k, Z(y_k) \right\rangle = 0 $ and  $\left\langle w^{2}_k - z_k, Z(z_k) \right\rangle = 0$. 

Now, let us connect these two true $Z$-orbits. Using the definition of the connecting domain $\mathcal{A}$, there exist $t^{+}_{2} > 0$ and $t^{-}_{2} > 0$ such that $\varphi_{Z}(t^{-}_{2},w^{1}_{N}) = \varphi_{Z}(-t^{+}_{2},w^{2}_{0}) = p_{2}^* \in \Sigma$ for some concatenation $\gamma_{2}^*$ of local $Z$-trajectories.

Take $\delta = max \{ \delta^1, \delta^2 \}$ and consider the $\delta$-$Z$-pseudo orbit \linebreak $\{ \widetilde{y}_k \}_{k=0}^{N+M+1} = (y_0, y_1, \hdots, y_{N}, z_0, z_1, \hdots, z_{M})$.

Take $\{ w_k \}_{k=0}^{N+M+1} = (w^{1}_0, w^{1}_1, \hdots, w^{1}_{N}, w^{2}_0, w^{2}_1, \hdots, w^{2}_{M})$. Then, using $\gamma_{2}^*$, we conclude that  there exists a true orbit of $Z$ passing through $\{ w_k \}_{k=0}^{N+M+1}$. %, that is the only one presenting the orthogonality condition.

%The uniqueness of $\{ x^{+}_k \}_{k=0}^{N^{+}}$ and $\{ w^{1}_k \}_{k=0}^{N^{1}}$ ensure the uniqueness of $\{ x_k \}_{k=0}^{N}$ such that $\left\langle x_k - y_k, Z(y_k) \right\rangle = 0 $ and $\{ w_k \}_{k=0}^{N}$ such that $\left\langle w_k - z_k, Z(z_k) \right\rangle = 0 $, respectively. 
\end{proof}

\begin{theorem}
	\label{teoShadowingAnexo2} 
	Consider the same hypotheses of Theorem \ref{teoShadowing}.
		Then, the true orbits obtained in Theorems \ref{teoShadowing} and \ref{teoShadowingAnexo1} can be taken closed/periodic.  %, if the number of $\delta$-pseudo orbits of Theorem \ref{teoShadowingAnexo1} is finite, can be taken closed/periodic.
		\end{theorem}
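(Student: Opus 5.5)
The idea is to close up the shadowing orbit using the recursive point $p^*$, in the same way the proofs of Theorems \ref{teoShadowing} and \ref{teoShadowingAnexo1} used it to glue sub-chains together. Let $\{ x_k \}_{k=0}^{N}$ be the true $Z$-orbit $\varepsilon$-shadowing a $\delta$-$Z$-pseudo-orbit $\{ y_k \}_{k=0}^{N} \subset \mathcal{A}$, obtained from Theorem \ref{teoShadowing}. For several pseudo-orbits, take instead the concatenated orbit through $\{ w_k \}$ from Theorem \ref{teoShadowingAnexo1}. This orbit is a finite concatenation $\Gamma$ of local $Z$-trajectories, running from $x_0$ to $x_N$ in a total time $T_\Gamma = \sum \tau_i$ plus the finitely many connecting times $t^{\pm}_j$ used at the switching steps.

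Since $x_N \in \mathcal{A}$, the Remark following Definition \ref{def recursive point} gives some $Z$-trajectory $\varphi_Z$ and a time $t^1 > 0$ with $\varphi_Z(t^1, x_N) = p^*$. Since $x_0 \in \mathcal{A}$, the same Remark gives a time $t^2 > 0$ with $\varphi_Z(-t^2, x_0) = p^*$, that is, a forward arc $\gamma^*$ from $p^*$ to $x_0$. Here one must check that $x_N$ and $x_0$ really lie in $\mathcal{A}$. In the proof of Theorem \ref{teoShadowing} this was used implicitly, since $Z$ is defined only on $\mathcal{A}$ and true $Z$-orbits therefore remain in $\mathcal{A}$; I would take this as the standing convention.

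Concatenate three pieces: the arc from $p^*$ to $x_0$, then $\Gamma$ from $x_0$ to $x_N$, then the arc from $x_N$ back to $p^*$. The result is a global trajectory of $Z$ in the sense of Definition \ref{def1}. It starts and ends at $p^*$ after the finite time $T = t^2 + T_\Gamma + t^1$, so it is a closed trajectory through $p^*$. It still passes through every $x_k$ with the same timesteps $\tau_i$, so it still $\varepsilon$-shadows the pseudo-orbit. Repeating this loop indefinitely in both time directions gives a $T$-periodic global $Z$-trajectory, which is legitimate because concatenation at $p^*$ is allowed by Definition \ref{definicao trajetorias locais}.

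The main obstacle is legitimacy at the gluing points, since trajectories of a PSVF are non-unique. I must ensure that the local trajectory entering $p^*$ from one branch may be followed by the local trajectory leaving along another branch. This is exactly what recursiveness of $p^*$ grants: every $Z$-trajectory through $p^*$ returns to it, so every incoming branch may be continued by every outgoing branch. The same concern arises at the interior gluing points $x_k$, but there $y_k \notin \Sigma$ and $\varepsilon$ is small, so $x_k \notin \Sigma$ and the trajectory through $x_k$ is locally unique. I would argue this at $p^*$ first, since it is the only step beyond bookkeeping.
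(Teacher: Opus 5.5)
Your proposal is correct and is essentially the paper's argument: use the saturation property to run forward from $x_N$ to $p^*$ and from $p^*$ to $x_0$, which closes the shadowing orbit into a loop through $p^*$ that can be repeated. One small correction: the freedom to glue an incoming branch to an outgoing one at $p^*$ (or at $x_0$, $x_N$) comes from Definition \ref{definicao trajetorias locais}, which chooses the $t\le 0$ and $t\ge 0$ pieces independently of each other, not from the recursiveness of $p^*$ as your last paragraph claims; for the same reason you do not need $x_k\notin\Sigma$, which would not follow anyway, since $\varepsilon$ is fixed before the pseudo-orbit is chosen.
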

\begin{proof}
	In this case it is enough to observe that, from the definition of  saturation of a recursive point, the orbit departing from the last point of the true orbit obtained in the proofs of Theorems \ref{teoShadowing} and \ref{teoShadowingAnexo1} must reach the recursive point $p^*$.

	If we consider the true orbit departing from $p^*$ and performing the same path already described in the proofs of Theorems \ref{teoShadowing} and \ref{teoShadowingAnexo1}, we will have the required closed/periodic true orbit. 
\end{proof}

\begin{theorem}
	\label{teoShadowingAnexo3} 
	Consider the same hypotheses of Theorem \ref{teoShadowing}. 
	Then, the true orbits obtained in Theorems \ref{teoShadowing} and \ref{teoShadowingAnexo1} can be taken
dense in the closure of  $\mathcal{A}$.
	\end{theorem}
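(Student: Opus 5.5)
The idea is to use the freedom that the saturation $\mathcal{A}$ gives us: every time a true orbit passes through the recursive point $p^*$, we may choose which $Z$-trajectory it follows next. So after the true orbit required by Theorems \ref{teoShadowing} and \ref{teoShadowingAnexo1} has been built, we append to it, through $p^*$, a sequence of excursions that visit a countable dense subset of $\mathcal{A}$. This does not affect the shadowing property, because shadowing only constrains the orbit near the finitely many points $x_k$ (and $w_m$). The extra excursions are inserted after the last shadowing point, or equivalently between two consecutive visits to $p^*$ after the finite shadowing block.

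First, we fix a countable dense subset $\{q_j\}_{j\ge 1}$ of $\mathcal{A}$; this is possible since $\mathcal{A}\subset\R^n$ is separable. Density of $\{q_j\}$ in $\mathcal{A}$ gives density in $\overline{\mathcal{A}}$. By Definition \ref{def recursive point} and the subsequent Remark, each $q_j$ lies on a closed $Z$-trajectory $\Gamma_j$ through $p^*$. Concretely, there are a trajectory and times with $q_j=\varphi_Z(t^1_{q_j},p^*)$ and $p^*=\varphi_Z(t^2_{q_j},q_j)$, so $\Gamma_j$ is a loop based at $p^*$ of finite period $t^1_{q_j}+t^2_{q_j}$.

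Second, we take the true orbit $\Gamma$ from the proof of Theorem \ref{teoShadowing} (or Theorem \ref{teoShadowingAnexo1}) passing through $x_0,\dots,x_N$. As in the proof of Theorem \ref{teoShadowingAnexo2}, the forward orbit from $x_N$ can be continued until it reaches $p^*$; call the resulting arc $\gamma_0$. We then define a global trajectory, in the sense of Definition \ref{def1}, as the infinite concatenation
\[
\Gamma\ast\gamma_0\ast\Gamma_1\ast\Gamma_2\ast\Gamma_3\ast\cdots .
\]
Each junction occurs at $p^*$. Gluing is legitimate there because, by recursiveness, every $Z$-trajectory through $p^*$ returns to $p^*$, and Definition \ref{definicao trajetorias locais} allows any admissible choice of local trajectory after time $0$ at a point of $\Sigma$ (or, when $p^*\notin\Sigma$, the loops through $p^*$ are simply concatenated). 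Since every loop has positive finite period, the total time diverges, so the trajectory is defined for all $t\ge0$. It contains every $q_j$, so its closure contains $\overline{\mathcal{A}}$. It is contained in $\mathcal{A}$ because every point on it lies on a trajectory through $p^*$. Its initial segment still contains $\{x_k\}$, so the $\varepsilon$-shadowing estimates are unchanged.

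The main obstacle is making sure this infinite concatenation is an honest maximal $Z$-trajectory rather than merely a union of arcs. Two things must be checked: that the choice of continuation at $p^*$ can indeed be made independently at each passage, and that the times do not accumulate, so there is no Zeno behaviour. For the first point I would rely directly on the non-uniqueness built into Definition \ref{definicao trajetorias locais}(iii)--(iv) at $p^*$. For the second, it suffices to note that each $\Gamma_j$ has period bounded below by a positive constant, or simply to avoid needing such a bound by observing that infinitely many loops each of positive length already give an unbounded time parameter.

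If a bi-infinite dense orbit is desired, we do the same in backward time: we prepend loops through $p^*$, using the backward returns guaranteed by recursiveness.
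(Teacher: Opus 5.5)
Your approach is the paper's. After the shadowing block the orbit returns to $p^*$, and from $p^*$ the saturation lets you steer into any prescribed part of $\mathcal{A}$. Your version is in fact more complete than the paper's proof. The paper only shows that for each ball $B^{p}_{\mu}$ with $p\in\overline{\mathcal{A}}$ \emph{some} continuation from $p^*$ enters it; it never assembles one orbit that meets every ball. Your countable dense set $\{q_j\}$ and the infinite concatenation supply exactly that step. Your gluing at $p^*$ is fine, because the rules of Definition \ref{definicao trajetorias locais} constrain the backward and forward halves at a point independently.

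The step that fails as written is the non-Zeno argument. The claim that ``infinitely many loops each of positive length already give an unbounded time parameter'' is false: periods $2^{-j}$ have finite sum. The uniform positive lower bound on periods of loops through $p^*$ is asserted, not derived from anything in the hypotheses. Density of the orbit's image does not need unbounded time, but your claim that the concatenation is a trajectory defined for all $t\ge 0$ does.

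The repair is immediate. Interleave a fixed loop $\Gamma_0$ through $p^*$ of period $T_0>0$, i.e.\ use $\Gamma\ast\gamma_0\ast\Gamma_1\ast\Gamma_0\ast\Gamma_2\ast\Gamma_0\ast\cdots$, so the total time is at least $\sum_j T_0=\infty$. Alternatively, if $\mathcal{A}\neq\{p^*\}$, infinitely many $q_j$ lie at distance at least some $d>0$ from $p^*$. Since $|Z|\le\|Z\|_{C^r}$ (with $Z^s$ a convex combination of $X^{\pm}$), each of those loops has period at least $2d/\|Z\|_{C^r}$.

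With that change the rest of your argument goes through. It rests on the same return-to-$p^*$ step from the proof of Theorem \ref{teoShadowingAnexo2} that the paper itself uses.
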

\begin{proof}
 Consider the true orbit $\gamma$ obtained in Theorems \ref{teoShadowing} and \ref{teoShadowingAnexo1}.

Let $p \in \overline{\mathcal{A}}$, the closure of $\mathcal{A}$ and consider a small open ball $B_{\mu}^{p}$ centered in $p$ with ray $\mu>0$.

The true orbit  $\gamma$ return to  the  recursive point $p^*$ of the  set $\mathcal{A}$. After it, from the definition of $\mathcal{A}$, there exists  an arc of trajectory connecting $p^*$ and $B_{\mu}^{p}$. 
\end{proof}

\section{Conclusion}\label{secao conclusao}

This paper establishes a version of the classical Shadowing Theorem within the framework of PSVFs for finite-time chain of trajectories. It demonstrates that, even when considering pseudo-trajectories, it is possible to obtain a true orbit arbitrarily close to the chain of trajectories composing the pseudo-orbit. This result is obtained under the assumption that the domain of the PSVF is the saturation of a recursive point, which is a notion introduced herein as an original contribution to the literature.

After the proof of the main result, three additional theorems are presented to further elucidate the distinctions between the smooth (or  C$^2$) and piecewise smooth scenarios. Specifically, these results establish: (i) that an arbitrary number of distinct pseudo-orbits can be simultaneously shadowed by a single true orbit (Theorem \ref{teoShadowingAnexo1}); (ii) that such shadowing can be realized in a manner where the true orbit is closed/periodic (Theorem \ref{teoShadowingAnexo2}); and (iii) that the shadowing can be performed so that the true orbit is dense in the domain of the PSVF (Theorem \ref{teoShadowingAnexo3}).

Particularly, the last 3 results  do not occur in the C$^2$ context. This reveals that the shadowing property produces surprising and amazing phenomena that are  typical situations obtained for PSVFs. % In the future, an even deeper study can be done using such shadowing property to obtain more typical behavior for PSVFs. 

\section*{Acknowledgements}

Tiago Carvalho is partially supported by S\~{a}o Paulo Research Foundation (FAPESP grants \# 2024/15612-6, \#2021/12395-6, \#2019/10269-3 and \#2022/02819-6) and by Conselho Nacional de Desenvolvimento Cient\'{i}fico e Tecnol\'{o}gico (CNPq Grants 309378/2023-0 and 401974/2025-1).

%\section*{Data Availability}
%
%Data sharing not applicable to this article as no datasets were generated or analysed during the current study.
%
%\section*{Statement}
%
%The  author states that there is no conflict of interest.

%%\bibliographystyle{unsrt}

% \bibliographystyle{elsarticle-num} 
%\bibliography{referencial-Tiago-out25.bib}

\begin{thebibliography}{10}
			\expandafter\ifx\csname url\endcsname\relax
			\def\url#1{\texttt{#1}}\fi
			\expandafter\ifx\csname urlprefix\endcsname\relax\def\urlprefix{URL }\fi
			\expandafter\ifx\csname href\endcsname\relax
			\def\href#1#2{#2} \def\path#1{#1}\fi
			
			\bibitem{SurveyShadowing}
			W.~Hayes, K.~R. Jackson, A survey of shadowing methods for numerical solutions
			of ordinary differential equations, Applied Numerical Mathematics 53 (2005)
			299--321.
			
			\bibitem{Backes-2022}
			L.~Backes, D.~Dragičević, L.~Singh, Shadowing for nonautonomous and nonlinear
			dynamics with impulses, Monatsh Math 198 (2022) 485--502.
			
			\bibitem{Zhan-2020}
			Q.~Zhan, Z.~Zhang, Y.~Li, Numerical implementation of finite-time shadowing of
			stochastic differential equations, Indian J Pure Appl Math 51 (2020) 1939 --
			1957.
			
			\bibitem{Gao-2021}
			S.~Gao, A shadowing lemma for random dynamical systems, Journal of Applied
			Analysis and Computation 11~(6) (2021) 3014 -- 3030.
			
			\bibitem{Carvalho-leukemia}
			L.~F. Gon\c{c}alves, D.~S. Rodrigues, P.~F.~A. Mancera, T.~Carvalho, A
			mathematical model for chemoimmunotherapy of chronic lymphocytic leukemia,
			Applied Mathematics and Computation 349 (2019) 118--133.
			
			\bibitem{Carvalho-typicalSingCancer}
			L.~F. Gon\c{c}alves, D.~S. Rodrigues, P.~F.~A. Mancera, T.~Carvalho, Sliding
			mode control in a mathematical model to chemoimmunotherapy: the occurrence of
			typical singularities, Applied Mathematics and Computation 387 (2020) 124782.
			
			\bibitem{coomes-1994-FiniteChain}
			B.~A. Coomes, H.~Ko\'{c}ak, K.~J. Palmer, Shadowing orbits of ordinary
			differential equations, Journal of Computational and Applied Mathematics 52
			(1995) 35--43.
			
			\bibitem{Fi}
			A.~F. Filippov, Differential Equations with Discontinuous Righthand Sides, 1st
			Edition, Vol.~18 of Mathematics and its Applications, Springer Netherlands,
			1988.
			
			\bibitem{BCEchaotic}
			C.~A. Buzzi, T.~Carvalho, R.~D. Euz\'{e}bio, Chaotic planar piecewise smooth
			vector fields with non-trivial minimal sets, Ergodic Theory and Dynamical
			Systems 36~(2) (2016) 458--469.
			
			\bibitem{JDE-Entropia-2023}
			T.~Carvalho, A.~Antunes, R.~Var\~{a}o, On topological entropy of piecewise
			smooth vector fields, Journal of Differential Equations 362 (2023) 52--73.
			
			\bibitem{Tiago-ClosingLema}
			T.~Carvalho, On the closing lemma for planar piecewise smooth vector fields,
			Journal de Math\`{e}matiques Pures et Appliqu\'{e}es 106~(6) (2016)
			1174--1185.
			
			\bibitem{CarvalhoEuzebio-EJQTDE}
			T.~Carvalho, R.~Euzebio, Minimal sets and chaos in planar piecewise smooth
			vector fields, Electronic Journal of Qualitative Theory of Differential
			Equations 33 (2020) 1--15.
			
		\end{thebibliography}

	\section*{Appendix}
	
	The purpose of this appendix is to announce the Shadowing Theorem stated in \cite{coomes-1994-FiniteChain}. To do this, we rewrite the introductory technical text given in \cite{coomes-1994-FiniteChain}.
	
Consider a C$^2$ vector field $\dot{x} = F(x)$.	Let $\{y_k\}_{k=0}^N$ be a $\delta$-pseudo-orbit of $(1)$ with associated times
	$\{h_k\}_{k=0}^{N-1}$. Also suppose that we have a sequence
	$\{Y_k\}_{k=0}^{N-1}$ of $n \times n$ matrices such that
	\begin{equation}
		\|Y_k - DF(y_k)\| < \delta, \quad \text{for } k = 0, \ldots, N-1.
		\tag{2}
	\end{equation}
	
	We will define a sequence $\{A_k\}_{k=0}^{N-1}$ of $(n-1)\times(n-1)$ matrices
	in the following way. For $k = 0, \ldots, N$, let $S_k$ be an $n \times (n-1)$
	matrix chosen so that its columns form an orthonormal basis for the subspace
	orthogonal to $F(y_k)$. Now, we let
	\begin{equation}
		A_k = S_{k+1}^* Y_k S_k, \quad \text{for } k = 0, \ldots, N-1,
		\tag{3}
	\end{equation}
	where $^*$ denotes transpose. Geometrically, $A_k$ is $Y_k$ restricted to the
	subspace orthogonal to $F(y_k)$ and then projected onto the subspace orthogonal
	to $F(y_{k+1})$.
	
	Next, we define a linear operator
	\[
	L : (\mathbb{R}^{n-1})^{N+1} \to (\mathbb{R}^{n-1})^{N}
	\]
	in the following way. If
	$\xi = \{\xi_k\}_{k=0}^N \in (\mathbb{R}^{n-1})^{N+1}$, then we take
	$L\xi = \{(L\xi)_k\}_{k=0}^{N-1}$ to be
	\begin{equation}
		(L\xi)_k = \xi_{k+1} - A_k \xi_k, \quad \text{for } k = 0, \ldots, N-1.
		\tag{4}
	\end{equation}
	
	The operator $L$ has right inverses, and we choose one such right inverse $L^{-1}$.
	
	As the last piece of our notational collection, we define several constants.
	Let $\varepsilon_0$ be a positive number and let $U$ be a convex open set
	containing $\{y_k\}_{k=0}^N$ such that if $x$ is in the ball about $y_k$
	with radius $\varepsilon_0$, then the solution $\phi^t(x)$ is defined for
	$0 < t < 2h_k$ and is in $U$. For such a $U$, we define
	\[
	M_0 = \sup_{x \in U} \| F(x)\| \,\, , \,\,	M_1 = \sup_{x \in U} \|D F(x)\| \,\, , \,\,	M_2 = \sup_{x \in U} \|D^2 F(x)\|.
	\]
	Finally, we define
	\[
\Delta = \inf_{0 \leq k \leq N} \| F(y_k)\| \,\, , \,\, \Theta = \sup_{0 \leq k \leq N - 1} \| Y_k\| \,\, , \,\, 
	h = \sup_{0 \le k \le N-1} h_k.
	\]
	
	Now, we can state our main theorem.

	\begin{theorem}\label{teo shadowing para suaves}
	[Finite-time Shadowing Theorem \cite{coomes-1994-FiniteChain}]
Let $\{y_k\}_{k=0}^N$ be a $\delta$-pseudo-orbit of the autonomous system
$\dot{x} = F(x)$, and let
\[
C = \max \left\{ \Delta^{-1}\bigl( \Theta \|L^{-1}\| + 1\bigr), \, \|L^{-1}\| \right\}.
\]
If $\delta$ satisfies the inequalities
\begin{enumerate}
\item[(i)]	$C(M_1 + 1)\delta \le \frac{1}{2},$

	\item[(ii)]
	$
	4C\delta < \min_{0 \le k \le N-1} h_k,
	\qquad
	4C\delta < \varepsilon_0,
	$
	
	\item[(iii)]
	$
	8\bigl(M_0 M_1 + 2 M_1 e^{2 M_1 h}
	+ 2 M_2 h e^{4 M_1 h}\bigr) C^2 \delta \leq 1,
	$
\end{enumerate}
then the pseudo-orbit $\{y_k\}_{k=0}^N$ is $\varepsilon$-shadowed by a true orbit
$\{x_k\}_{k=0}^N$ with $\varepsilon < 4C\delta$.
	\end{theorem}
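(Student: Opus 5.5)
The plan is to prove the theorem by a contraction argument in the style of Newton's method, as in \cite{coomes-1994-FiniteChain}. The unknowns are corrections $\xi_k\in\mathbb{R}^{n-1}$, $k=0,\dots,N$, and times $\tau_k$, $k=0,\dots,N-1$. Each true point is sought on the hyperplane through $y_k$ orthogonal to $F(y_k)$:
\[ x_k = y_k + S_k\xi_k . \]
Write $\tau_k = h_k + s_k$ and $\phi^t$ for the flow of $F$. The shadowing orbit is then a zero of the map
\[ \mathcal{G}(\xi,s)_k = \phi^{h_k+s_k}(y_k+S_k\xi_k) - y_{k+1} - S_{k+1}\xi_{k+1}, \qquad k=0,\dots,N-1. \]
At $(\xi,s)=(0,0)$ we have $\|\mathcal{G}_k\|\le\delta$, by the definition of $\delta$-pseudo-orbit.

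I will split each equation $\mathcal{G}_k=0$ along the orthogonal decomposition $\mathbb{R}^n = \mathrm{span}\{F(y_{k+1})\}\oplus \mathrm{range}(S_{k+1})$.

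\textbf{Tangential component.} Differentiating in $s_k$ gives $F(\phi^{\tau_k}(x_k))\approx F(y_{k+1})$. Since $\|F(y_{k+1})\|\ge\Delta$, the component along $F(y_{k+1})$ can be solved for $s_k$ with a factor $\Delta^{-1}$. This is where the first term of $C$ comes from.

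\textbf{Transversal component.} Applying $S_{k+1}^*$ gives
\[ \xi_{k+1} - S_{k+1}^*D\phi^{h_k}(y_k)S_k\xi_k = g_k + (\text{nonlinear terms}). \]
I will read $Y_k$ as the approximation to the linearised time-$h_k$ map. That is the role it plays in \cite{coomes-1994-FiniteChain}; the hypothesis (2) as transcribed, $\|Y_k-DF(y_k)\|<\delta$, reads as a typo for $\|Y_k-D\phi^{h_k}(y_k)\|<\delta$. With this reading the linear part is $L\xi$ up to an error of order $\delta\|\xi\|$. The cross term coming from $s_k$ is absorbed using $\Theta$, and this is what produces $\Delta^{-1}(\Theta\|L^{-1}\|+1)$ in $C$.

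\textbf{Fixed-point formulation.} I will choose the fixed right inverse $L^{-1}$ and define an operator
\[ \mathcal{T}(\xi,s) = \bigl(-L^{-1}[\,g+R(\xi,s)\,],\; \text{solved } s\bigr). \]
Here $R$ collects three pieces: the mismatch $(Y_k-D\phi^{h_k})\xi_k$, the Taylor remainders of $\phi$ in $x$ and $t$, and the tangential-normal coupling. The goal is to show that $\mathcal{T}$ maps the closed ball $\{\|\xi\|,|s|\le 4C\delta\}$ into itself and is a contraction with constant at most $1/2$. Banach's fixed point theorem then gives a zero of $\mathcal{G}$ with $\|x_k-y_k\|$ and $|\tau_k-h_k|$ both less than $4C\delta$.

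\textbf{Role of each hypothesis.}
\begin{enumerate}
\item[(ii)] Keeps $\tau_k>0$ and each $x_k$ inside the $\varepsilon_0$-ball, so the flow stays in $U$ for $t<2h_k$ and $M_0,M_1,M_2$ apply.
\item[(i)] Handles the linear perturbation terms of size $(M_1+1)\delta$.
\item[(iii)] Handles the quadratic remainder.
\end{enumerate}

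\textbf{Main obstacle.} I expect the hardest step to be the Lipschitz and quadratic estimates for $R$ on the ball. The key inputs are Gronwall bounds $\|D\phi^t\|\le e^{M_1 t}$ and $\|D^2\phi^t\|\le M_2 t\,e^{2M_1 t}$ for $t\le 2h$. Together with $\|\partial_t\phi\|\le M_0$ and $\|\partial_t D\phi\|\le M_1e^{M_1t}$, these yield a bound
\[ \mathrm{Lip}(R)\lesssim \bigl(M_0M_1+2M_1e^{2M_1h}+2M_2he^{4M_1h}\bigr)\,4C\delta . \]
Multiplying by $C$ and invoking (iii) then makes the contraction constant at most $1/2$ and closes the invariance estimate $C\delta+\tfrac12\cdot 4C\delta\le 4C\delta$. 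I would first carry out the bookkeeping in the simplest case, with all $h_k$ equal, and then pass to the general case.
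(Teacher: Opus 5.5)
The paper gives no proof of this statement; it is quoted from \cite{coomes-1994-FiniteChain}, and your outline follows the route taken there. That route uses corrections $x_k=y_k+S_k\xi_k$ on the hyperplanes orthogonal to $F(y_k)$, an approximate linearisation $\mathcal{L}$ built from $Y_k$ and $F(y_{k+1})$ whose right inverse has norm at most $C$, and Gronwall bounds that produce the bracket in (iii). You are right that (2) must be read as $\|Y_k-D\phi^{h_k}(y_k)\|<\delta$. One small correction: the transversal equation contains no $s_k$, since $S_{k+1}^*F(y_{k+1})=0$. The term controlled by $\Theta$ is the component of $Y_kS_k\xi_k$ along $F(y_{k+1})$, which enters the equation for $s_k$ and gives $|s_k|\le\Delta^{-1}(1+\Theta\|L^{-1}\|)\|g\|$ for the linear problem $\mathcal{L}(\xi,s)=g$.

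The gap is in the closing estimate. By your own definition, $R$ contains, besides the Taylor remainder, the linear terms $(Y_k-D\phi^{h_k}(y_k))S_k\xi_k$ and $(F(\phi^{h_k}(y_k))-F(y_{k+1}))s_k$, whose Lipschitz constant is $(M_1+1)\delta$. Write $K=M_0M_1+2M_1e^{2M_1h}+2M_2he^{4M_1h}$ and use a right inverse of norm $C$. Then on the ball of radius $4C\delta$
\[
\mathrm{Lip}(\mathcal{T})\le C(M_1+1)\delta+4KC^2\delta\le\tfrac12+\tfrac12=1 ,
\]
because (i) and (iii) each use up half of the budget; (iii) alone does not make the constant $\le\frac12$. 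So Banach's theorem does not apply, and the invariance bound $C\delta+\frac12\cdot 4C\delta$ is not available; the Lipschitz form of the estimate only yields $5C\delta$. Shrinking the ball does not help either. If (i) and (iii) hold with equality, these estimates give invariance only at radius exactly $4C\delta$, where the Lipschitz bound is exactly $1$.

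The hypotheses are calibrated for a second-order estimate, and either of two repairs works.

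\emph{Repair (a).} Keep $\mathcal{T}$, write $z=(\xi,s)$, and use $\|\mathcal{G}(z)-\mathcal{G}(0)-D\mathcal{G}(0)z\|\le\frac K2\|z\|^2$. For $\|z\|\le 4C\delta$ this gives
\[
\|\mathcal{T}(z)\|\le C\delta+C(M_1+1)\delta\cdot 4C\delta+\tfrac{CK}{2}(4C\delta)^2\le C\delta+2C\delta+C\delta=4C\delta .
\]
So $\mathcal{T}$ maps the closed ball, a compact convex subset of $(\mathbb{R}^{n-1})^{N+1}\times\mathbb{R}^N$, into itself. Brouwer's theorem then provides a fixed point, hence a zero of $\mathcal{G}$.

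\emph{Repair (b).} Use (i) once, via a Neumann series, to turn $\mathcal{L}^{-1}$ into a right inverse of the true derivative $D\mathcal{G}(0)$ of norm at most $2C$. Then run a Newton--Kantorovich argument on its range with $\beta=2C$ and $\eta=2C\delta$. Condition (iii) is exactly $\beta K\eta\le\frac12$, and the zero lies within $2\eta=4C\delta$; this is where the factor $8$ and the radius $4C\delta$ come from.

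In the borderline case both repairs give only $\|x_k-y_k\|,\ |\tau_k-h_k|\le 4C\delta$. The strict inequality you stated rested on the invariance bound $3C\delta$, so it is not obtained.
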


			\end{document}